\newcommand{\Sp}{\mathop{\mathrm{Sp}}}
\newtheorem{theorem}{Theorem}
\newtheorem{corollary}[theorem]{Corollary}
\newcommand{\Irr}{\mathop{\mathrm{Irr}}}
\newcommand{\PGL}{\mathop{\mathrm{PGL}}}
\begin{document}

\title[]{A remark on the permutation representations afforded by the embeddings of $\mathrm{O}_{2m}^\pm(2^f)$ in $\mathrm{Sp}_{2m}(2^f)$}

\author[S.~Guest]{Simon Guest}
\address{Simon Guest, Mathematics, University of Southampton, Highfield, SO17 1BJ, United Kingdom}\email{s.d.guest@soton.ac.uk}

\author[A.~Previtali]{Andrea Previtali}

\author[P.~Spiga]{Pablo Spiga}
\address{
Dipartimento di Matematica e Applicazioni, University of Milano-Bicocca,\newline
Via Cozzi 53, 20125 Milano, Italy}\email{andrea.previtali@unimib.it, pablo.spiga@unimib.it}

\thanks{Address correspondence to P. Spiga,
E-mail: pablo.spiga@unimib.it}

\subjclass[2000]{20B15, 20H30}
\keywords{permutation groups; permutation character}

\begin{abstract}
We show that the permutation module over $\mathbb{C}$ afforded by the action of $\mathrm{Sp}_{2m}(2^f)$ on its natural module is isomorphic to the permutation module over $\mathbb{C}$ afforded by the action of $\Sp_{2m}(2^f)$ on the union of the right cosets of $\mathrm{O}_{2m}^+(2^f)$ and $\mathrm{O}_{2m}^-(2^f)$.
\end{abstract}
\maketitle

\section{Introduction}\label{introduction}
That a given finite group can have rather different permutation representations affording the same permutation character was shown by Helmut Wielandt in~$1979$. For instance, the actions of the projective general linear group $\PGL_d(q)$ on the projective points and on the projective hyperplanes afford the same permutation character, but these actions are not equivalent when $d\geq 3$. A more interesting example is offered by the Mathieu group $M_{23}$. Here we have two primitive permutation representations of degree $253$ affording the same permutation character, but with non-isomorphic point stabilizers (see~\cite[p.~$71$]{ATLAS}).

Establishing  which properties are shared by  permutation representations of a finite group $G$ with the same permutation character has been the subject of considerable interest. For instance, it was conjectured by Wielandt~\cite{W} that, if $G$ admits two permutation representations on $\Omega_1$ and $\Omega_2$ that afford the same permutation character, and if $G$ acts primitively on $\Omega_1$, then $G$ acts primitively  on $\Omega_2$. This conjecture was first reduced to the case that $G$ is almost simple by F\"{o}rster and Kov\'acs~\cite{FK} and then it was solved (in the negative) by Guralnick and Saxl~\cite{GS}. Some more recent investigations on primitive permutation representations and their permutation characters can be found in~\cite{P}.

In this paper we construct two considerably different  permutation representations of the symplectic group that afford the same permutation character. We let $q$ be a power of $2$,  $G$ be  the finite symplectic group $\Sp_{2m}(q)$, $V$ be the $2m$-dimensional natural module for  $\Sp_{2m}(q)$ over the field $\mathbb{F}_q$ of $q$ elements, and $\pi$ be the complex permutation character for the action (by matrix multiplication) of $G$ on $V$. Since $q$ is even, the orthogonal groups $\mathrm{O}_{2m}^+(q)$ and $\mathrm{O}_{2m}^-(q)$ are maximal subgroups of $G$ (see~\cite{D}). For $\varepsilon\in \{+,-\}$, we let $\Omega^\varepsilon$ denote the set of right cosets of $\mathrm{O}_{2m}^\varepsilon(q)$ in $G$, and we let $\pi^\varepsilon$ denote the permutation character for the action of $G$ on $\Omega^\varepsilon$.

\begin{theorem}\label{thrm}
The $\mathbb{C}G$-modules $\mathbb{C}V$ and $\mathbb{C}\Omega^+\oplus\mathbb{C}\Omega^-$ are isomorphic. That is, $\pi=\pi^++\pi^-$.
\end{theorem}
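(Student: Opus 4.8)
Since a complex character of a finite group is determined by its values, it suffices to show that $\pi(g)=\pi^+(g)+\pi^-(g)$ for every $g\in G$. The plan is to realise $\pi^++\pi^-$ as the permutation character of $G$ on the set of quadratic forms that polarise to the symplectic form $B$ fixed by $G$.

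Let $\mathcal{Q}$ be the set of quadratic forms $Q\colon V\to\mathbb{F}_q$ with polarisation $(u,v)\mapsto Q(u+v)-Q(u)-Q(v)$ equal to $B$. As $q$ is even, a quadratic form is \emph{not} determined by its polarisation: two members of $\mathcal{Q}$ differ by an additive map $f$ with $f(\lambda v)=\lambda^2 f(v)$, that is, by the square $\ell^2$ of some $\ell\in V^*$. Thus $\mathcal{Q}$ is an affine space over the $2m$-dimensional $\mathbb{F}_q$-space $\mathcal{L}=\{\ell^2:\ell\in V^*\}$, and $|\mathcal{Q}|=q^{2m}=|V|$. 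The group $G=\Sp(V,B)$ acts on $\mathcal{Q}$, and I claim $\mathcal{Q}\cong\Omega^+\sqcup\Omega^-$ as $G$-sets. Indeed, every $Q\in\mathcal{Q}$ is nondegenerate (its polarisation is), hence of Witt type $+$ or $-$; by Witt's extension theorem $G$ is transitive on the forms of each type (if $Q_1,Q_2$ have the same type there is $h\in\mathrm{GL}(V)$ with $Q_1\circ h=Q_2$, and any such $h$ preserves the common polarisation $B$, so $h\in G$); and the stabiliser of a type-$\varepsilon$ form $Q$ is $\mathrm{O}(V,Q)$, which lies in $\Sp(V,B)$ because an isometry of $Q$ preserves $B$, and which is a conjugate of $\mathrm{O}_{2m}^\varepsilon(q)$. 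Hence $\pi^++\pi^-$ is the permutation character of $G$ on $\mathcal{Q}$, so $(\pi^++\pi^-)(g)=\#\{Q\in\mathcal{Q}:Q\circ g=Q\}$; and since $\pi(g)=\#\{v\in V:vg=v\}=q^{\dim\ker(g-1)}$, the theorem reduces to
\begin{equation*}
\#\{Q\in\mathcal{Q}:Q\circ g=Q\}=q^{\dim\ker(g-1)}\qquad\text{for all }g\in G.
\end{equation*}

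To prove this I would use the affine structure. Fix $Q_0\in\mathcal{Q}$ and write $Q_0\circ g=Q_0+\phi_g$ with $\phi_g\in\mathcal{L}$; then $Q_0+f$ is $g$-invariant precisely when $f-f\circ g=\phi_g$, i.e. $(1-g)f=\phi_g$, where $g$ also denotes the induced $\mathbb{F}_q$-linear automorphism of $\mathcal{L}$. So the set of $g$-invariant forms is empty or a coset of $\ker(1-g|_{\mathcal{L}})$. Now $\ell\mapsto\ell^2$ is an additive, $g$-equivariant bijection $V^*\to\mathcal{L}$; it is merely semilinear over $\mathbb{F}_q$, but a bijective semilinear map preserves $\mathbb{F}_q$-dimensions of subspaces and intertwines the operators $1-g$ on $V^*$ and on $\mathcal{L}$, so $\dim_{\mathbb{F}_q}\ker(1-g|_{\mathcal{L}})=\dim_{\mathbb{F}_q}\ker(1-g|_{V^*})=\dim\ker(g-1)$ and $\mathrm{im}(1-g|_{\mathcal{L}})$ is the image of $\mathrm{im}(1-g|_{V^*})$. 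A rank computation shows $\mathrm{im}(1-g|_{V^*})$ is the annihilator of $\ker(g-1)$ in $V^*$; hence $\mathrm{im}(1-g|_{\mathcal{L}})$ consists exactly of the $f\in\mathcal{L}$ vanishing on $\ker(g-1)$. Therefore the set of $g$-invariant forms is nonempty if and only if $\phi_g$ vanishes on $\ker(g-1)$, and when nonempty it has exactly $q^{\dim\ker(g-1)}$ elements.

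To finish, note that $\phi_g$ does vanish on $\ker(g-1)$: if $vg=v$ then $\phi_g(v)=(Q_0\circ g)(v)-Q_0(v)=Q_0(v)-Q_0(v)=0$. This yields the displayed identity and hence $\pi=\pi^++\pi^-$. I do not expect this last step to be the obstacle — it is essentially forced, and is the one place where evenness of $q$ is genuinely used (for $q$ odd, $\mathcal{Q}$ is a single point and the statement is vacuous). The real care is needed in the setup: checking precisely that $\mathcal{Q}\cong\Omega^+\sqcup\Omega^-$ as $G$-sets (transitivity on each Witt type, the exact stabilisers, and that $\mathcal{Q}$ contains no forms outside the two orthogonal types) and handling that $\mathcal{L}$ is only semilinearly isomorphic to $V^*$ as a $G$-module. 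A clumsier route would prove the existence of a $g$-invariant form by running through the unipotent, and then the general, conjugacy classes of $\Sp_{2m}(q)$; the argument above is meant to avoid that.
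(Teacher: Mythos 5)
Your argument is correct, but it takes a genuinely different route from the paper. You prove the character identity pointwise: you identify $\Omega^+\cup\Omega^-$ with the set $\mathcal{Q}$ of quadratic forms polarising to the symplectic form (transitivity on each type via Witt, stabilisers equal to the full orthogonal groups), observe that $\mathcal{Q}$ is an affine space over the $2m$-dimensional space $\mathcal{L}=\{\ell^2:\ell\in V^*\}$, and count $g$-invariant forms by solving $(1-g)f=\phi_g$ in $\mathcal{L}$; the solvability criterion (that $\phi_g$ vanish on $\ker(g-1)$, which holds automatically since $Q_0(vg)=Q_0(v)$ for fixed vectors) together with the semilinear, $g$-equivariant bijection $V^*\to\mathcal{L}$ and the identity $\mathrm{im}(1-g|_{V^*})=\mathrm{Ann}(\ker(g-1))$ gives exactly $q^{\dim\ker(g-1)}=\pi(g)$ invariant forms, hence $\pi(g)=\pi^+(g)+\pi^-(g)$ for every $g$. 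All the steps you flag do go through, modulo only harmless bookkeeping (the right action is $Q\mapsto Q\circ g^{-1}$, and one should note $\mathcal{Q}\neq\emptyset$). The paper instead argues entirely character-theoretically: it computes $\langle\pi_0,\pi_0\rangle=2q-1$, decomposes $\pi_0=1_P^G+2\pi'$ via the parabolic $P$, uses Mackey's criterion to show $\pi'$ is a multiplicity-free sum of $q/2-1$ irreducibles of degree $|G:P|$, and combines Inglis's inner-product data for $\pi^\pm$ with the eigenvalue multiplicities of the rank-$3$ graph to pin down $\pi^+=1+\chi^++\pi'$ and $\pi^-=1+\chi^-+\pi'$, whence $\pi^++\pi^-=\pi$. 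Your route is more elementary and self-contained (no appeal to Inglis, Gow, or Mackey), makes the Corollary immediate since it exhibits an explicit $g$-invariant quadratic form for every $g$, and isolates exactly where evenness of $q$ is used; the paper's route yields finer structural information, namely the explicit irreducible decompositions of $\pi^+$ and $\pi^-$ and the fact that they differ only in the constituents $\chi^+$ and $\chi^-$.
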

We find this behaviour quite peculiar considering that the $G$-sets $V$ and $\Omega^+\cup \Omega^-$ are rather different. For instance, $G$ has two orbits of size $1$ and $q^{2m}-1$ on $V$, and has two orbits of size $q^m(q^m+1)/2$ and $q^{m}(q^m-1)/2$ on $\Omega^+\cup \Omega^-$. Moreover, the action of $G$ on both $\Omega^+$ and $\Omega^-$ is primitive, but the action of $G$ on $V\setminus \{0\}$ is not when $q>2$.

\section{Proof of Theorem~\ref{thrm}}\label{sec1}

Inglis~\cite[Theorem~$1$]{I} shows that the orbitals of the two orthogonal subgroups are self-paired, hence the characters $\pi^+$ and $\pi^-$ are multiplicity-free
(see \cite[\S 2.7]{C}). We will use this fact in our proof of Theorem~\ref{thrm}.

\begin{proof}[Proof of Theorem~\ref{thrm}] Let $1$ denote the principal character of $G$. Observe that $\pi=1+\pi_0$, where $\pi_0$ is the permutation character for the transitive action of $G$ on $V\setminus\{0\}$. In particular, for $v\in V\setminus\{0\}$, we have $\pi_0=1_{G_v}^G$, where $G_v$ is the stabilizer of $v$ in $G$. Frobenius reciprocity implies that $\langle\pi_0,\pi_0\rangle=\langle \pi_0|_{G_v},1\rangle$, and this equals the number of orbits of $G_v$ on $V\setminus\{0\}$. We claim that $G_v$ has $2q-1$ orbits on $V\setminus\{0\}$. More precisely we show that, given $w\in v^\perp\setminus\langle v\rangle$ and $w'\in V\setminus v^\perp$, the elements $\lambda v$ (for $\lambda\in \mathbb{F}_q\setminus\{0\}$),  $w$, and $\lambda w'$ (for $\lambda\in\mathbb{F}_q\setminus\{0\}$) are representatives for the orbits of $G_v$ on $V\setminus\{0\}$. Since $G_v$ fixes $v$ and preserves the bilinear form $(\,,\,)$, these elements are in distinct $G_v$-orbits. Let $u\in V\setminus\{0\}$. If $u\in \langle v\rangle$, then $u=\lambda v$ for some $\lambda\neq 0$, and hence there is nothing to prove. Let $w_0=w$ if $(v,u)=0$, and let $w_0=\frac{(v,u)}{(v,w')} w'$ if $(v,u)\neq 0$. By construction, the  $2$-spaces $\langle v,u\rangle$ and $\langle v,w_0\rangle$ are isometric and they admit an isometry $f$ such that $v^f=v$ and $u^f=w_0$. By Witt's Lemma~\cite[Proposition~$2.1.6$]{KL}, $f$ extends to an isometry $g$ of $V$. Thus $g\in G_v$ and $u^g=w_0$, which proves our claim.  Therefore, we have
\begin{equation}\label{eq1}
\langle \pi_0,\pi_0\rangle=2q-1.
\end{equation}

Next we need to refine the information in~\eqref{eq1}. Let $P$ be the stabilizer of the $1$-subspace $\langle v\rangle$ in $G$. Then $P$ is a maximal parabolic subgroup of $G$ and $P/G_v$ is cyclic of order $q-1$. Write $\eta=1_{G_v}^P$ and observe that $\eta=\sum_{\zeta\in \Irr(P/G_v)}\zeta$, where by abuse of terminology we identify
the characters  of $P/G_v$ with the characters of $P$ containing $G_v$ in the kernel. Thus
\[\pi_0=1_{G_v}^G=(1_{G_v}^P)_P^G=\eta_P^G=\sum_{\zeta\in \Irr(P/G_v)}\zeta_P^G.\]
Since every character of $G$ is real-valued~\cite{Gow}, we must have 
\[(\overline{\zeta})_P^G=\overline{\zeta_P^G}=\zeta_P^G,\] 
where $\overline{x}$ denotes the complex conjugate of $x \in \mathbb{C}$. Let $\mathcal{S}$ be a set of representatives, up-to-complex conjugation, of the non-trivial characters of $\Irr(P/G_v)$. Since $|P/G_v|=q-1$ is odd, we see that $|\mathcal{S}|=q/2-1$. We have
\[\pi_0=1_P^G+2\sum_{\zeta\in \mathcal{S}}\zeta_P^G.\]
If we write $\pi'=\sum_{\zeta\in\mathcal{S}}\zeta_P^G$, then we have $\pi_0=1_P^G+2\pi'$.

Since $1_P^G$ is the permutation character of the rank $3$ action of $G$ on the $1$-dimensional subspaces of $V$, we have $1_P^G=1+\chi^++\chi^-$ for some distinct non-trivial irreducible characters $\chi^+$ and $\chi^-$ of $G$. Let $\Gamma$ be the graph  with vertex set the $1$-subspaces
of $V$ and edge sets $\{\langle v\rangle, \langle w\rangle\}$ whenever $v\perp w$. Observe that $\Gamma$ is strongly regular with parameters
\[\left(\frac{q^{2m}-1}{q-1}, \frac{q^{2m-1}-q}{q-1},\frac{q^{2m-2}-1}{q-1}-2, \frac{q^{2m-2}-1}{q-1}\right).\]
Hence the eigenvalues of $\Gamma$ have multiplicity
$\frac{1}{2}\left(\frac{q^{2m}-q}{q-1}-q^m\right)$ and $\frac{1}{2}\left(\frac{q^{2m}-q}{q-1}+q^m\right)$
(see \cite[p. 27]{CV}).

Interchanging the roles of $\chi^+$ and $\chi^-$ if necessary, we may assume that  $\chi^-(1)<\chi^+(1)$.
The above direct computation proves that
\begin{equation}\label{new}
\chi^-(1)=\frac{1}{2}\left(\frac{q^{2m}-q}{q-1}-q^m\right)\quad \textrm{and}\quad\chi^+(1)=\frac{1}{2}\left(\frac{q^{2m}-q}{q-1}+q^m\right)
\end{equation}
(compare \cite[Section~$1$]{L}).

Fix $\zeta\in \mathcal{S}$. We claim that $\zeta_P^G$ is irreducible. From Mackey's irreducibility Criterion~\cite[Proposition~23, Section~$7.3$]{Serre}, we need to show that for every $s\in G\setminus P$, we have $\zeta_{sPs^{-1}\cap P}\neq \zeta^s$, where  $\zeta^s$ is the character of $sPs^{-1}\cap P$ defined by $(\zeta^s)(x)=\zeta(s^{-1}xs)$ and, as usual, $\zeta_{sPs^{-1}\cap P}$ is the restriction of $\zeta$ to $sPs^{-1}\cap P$. Fix a monomorphism $\psi$ from $P/P'$ into $\mathbb C^*$.
Since $\zeta$ is a class function of $P$, we need to consider
only elements $s$ in distinct $(P,P)$-double cosets. These correspond to the $P$-orbits $\langle v\rangle$, $v^\perp\setminus\langle v\rangle$ and $V\setminus v^\perp$. Let $H=\langle v,u\rangle$ be a hyperbolic plane and choose $s\in G$ such that 
\[vs=u,\quad us=u\quad\textrm{and}\quad s_{H^\perp}=1_{H^\perp}.\]
A calculation shows that $\zeta^s(x)=\psi(\mu^{-1})=\overline{\zeta(x)}$, where $vx=\mu v$. Since $q-1$ is odd, we have $\zeta(x)\ne\zeta^s(x)$ when $\mu\ne1$. Therefore $\zeta\neq \zeta^s$.
Finally choose $s\in G$ such that $(v,u,w,z)s=(w,z,v,u)$, where $H=\langle v,u\rangle\perp \langle w,z\rangle$ is an orthogonal sum of hyperbolic planes 
and $s_{H^\perp}=1_{H^\perp}$.
Another calculation shows that  $\zeta^s(x)=\psi(\lambda)$ and $\zeta(x)=\psi(\mu)$, where $vx=\mu v$ and $wx=\lambda$. If $\mu\ne\lambda$, then $\zeta^s(x)\ne\zeta(x)$ and hence
$\zeta^s\ne\zeta$. Our  claim is now proved.

Write $\pi'=\sum_{i=1}^\ell m_i\chi_i$ as a linear combination of the distinct irreducible constituents of $\pi'$. Observe that, by the previous paragraph, each $\chi_i$ is of the form $\zeta_P^G$, for some $\zeta\in \mathcal{S}$. Therefore $\chi_i$ has degree $|G:P|$ for each $i$ and, in particular, $1$, $\chi^+$ and $\chi^-$ are not irreducible constituents of $\pi'$.

The number of irreducible constituents of $\pi_0$ is
\[1+1+1+2(m_1+\cdots +m_\ell)= 3+2|\mathcal{S}|=3+2\left(\frac{q}{2}-1\right)=q+1,\]
and by~\eqref{eq1} we have
\[3+4m_1^2+\cdots+4m_\ell^2=2q-1.\]
Multiplying the first equation by $-2$ and adding the second equation we have
 \[-3+4m_1(m_1-1)+\cdots+4m_\ell(m_\ell-1)=-3.\]
It follows  that $m_1=\cdots =m_\ell=1$, and hence $\ell=q/2-1$. This shows that $\pi'$ is multiplicity-free.

Summing up, we have
\begin{equation}\label{eq11}
\pi_0=1+\chi^++\chi^-+2\pi',\quad\langle\pi',\pi'\rangle=\frac{q}{2}-1,\quad\langle 1+\chi^++\chi^-,\pi'\rangle=0.
\end{equation}

We now turn our attention to the characters $\pi^+$ and $\pi^-$. By Frobenius reciprocity, or by~\cite[Theorem~$1$~(i) and~(ii)]{I}, we see that
\begin{equation}\label{eq2}
\langle \pi^+,\pi^+\rangle=\langle \pi^-,\pi^-\rangle=\frac{q}{2}+1.
\end{equation}

By~\cite[Lemma~$2$ (iii) and (iv)]{I}, the orbits of $\mathrm{O}_{2m}^-(q)$ in its action on $\Omega^+$ are in one-to-one correspondence with the  elements in $\{\alpha+\alpha^2\mid \alpha\in \mathbb{F}_q\}$. In particular, we have $\langle \pi^+|_{\mathrm{O}_{2m}^-(q)},1\rangle=|\{\alpha+\alpha^2\mid \alpha\in \mathbb{F}_q\}|=q/2$. Now Frobenius reciprocity implies that
\begin{equation}\label{eq3}
\langle \pi^+,\pi^-\rangle=\frac{q}{2}.
\end{equation}

Next we show that
\begin{equation}\label{eq4}
\langle \pi_0,\pi^+\rangle=\langle\pi_0,\pi^-\rangle=q.
\end{equation}
Using Frobenius reciprocity, it suffices to show that the number of orbits of $\mathrm{O}_{2m}^\pm(q)$ on $V\setminus\{0\}$ is $q$. Fix $\varepsilon\in \{+,-\}$ and  let $Q^\varepsilon~$ be the quadratic form on $V$ preserved by $\mathrm{O}_{2m}^\varepsilon(q)$. For $\lambda\in \mathbb{F}_q$, we see from~\cite[Lemma~$2.10.5$~(ii)]{KL} that $\Omega_{2m}^\varepsilon(q)$ is transitive on $V_\lambda^\varepsilon=\{v\in V\setminus\{0\}\mid Q^\varepsilon(v)=\lambda\}$. In particular, $\{V_\lambda^\varepsilon \mid \lambda\in \mathbb{F}_q \}$ is  the set of orbits of $\Omega_{2m}^\varepsilon(q)$ on $V\setminus\{0\}$. Since $\mathrm{O}_{2m}^\varepsilon(q)$ is the isometry group of $Q^\varepsilon$ we see that $\{V^{\epsilon}_\lambda \mid \lambda \in \mathbb{F}_{q}\}$ is also the set of orbits of $\mathrm{O}_{2m}^\varepsilon(q)$ on $V\setminus\{0\}$, and~\eqref{eq4} is now proved.

Since $\pi^+$ is multiplicity-free, up to reordering, by~\eqref{eq11} and~\eqref{eq4}, we may assume that
\begin{equation*}
\pi^+=1+a\chi^-+b\chi^++\sum_{i=1}^t\chi_i+\rho,
\end{equation*}
where $a,b\in \{0,1\}$, $0\le t\le\frac q2-1$ and $\langle \pi_0,\rho\rangle=0$.
By ~\eqref{eq4}, we have  $q-2\ge 2t=q-1-a-b\ge q-3$. Hence $2t=q-2$ and $\{a,b\}=\{0,1\}$.
Since $\pi^+(1)=|\Omega^+|=q^m(q^m+1)/2$ and $\pi'(1)=(q/2-1)|G:P|=(q/2-1)(q^{2m}-1)/(q-1)$, it follows by~\eqref{new} that $a=0$ and $b=1$.
By~\eqref{eq2}, we have $\pi^+=1+\chi^++\pi'$.

Now~\eqref{eq11},~\eqref{eq2},~\eqref{eq3} and~\eqref{eq4} imply immediately that $\pi^-=1+\chi^-+\pi'$. This shows that
\[\pi^++\pi^-=(1+\chi^++\pi')+(1+\chi^{-}+\pi')=1+1+\chi^++\chi^-+2\pi'=1+\pi_0=\pi,\]
which completes the proof of Theorem~\ref{thrm}.
\end{proof}

We note that the ``$q=2$'' case  of Theorem~\ref{thrm} was first proved by Siemons and Zalesskii in~\cite[Proposition~$3.1$]{SZ}. This case is particularly easy to deal with (considering that the action of $G$ on both $\Omega^+$ and $\Omega^-$ is  $2$-transitive) and its proof depends only on Frobenius reciprocity. However, the general statement (valid for every even $q$) of Theorem~\ref{thrm} was undoubtedly inspired by their observation.

Theorem~\ref{thrm} reproduces the following result as an immediate corollary (see \cite[Theorem 6]{D}).
\begin{corollary}Every element of $\Sp_{2m}(q)$ is conjugate to an element of $\mathrm{O}_{2m}^+(q)$ or of $\mathrm{O}_{2m}^-(q)$.
\end{corollary}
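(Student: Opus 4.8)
The plan is to read off the corollary directly from Theorem~\ref{thrm} by interpreting the three permutation characters involved as fixed-point-counting functions. The one extra ingredient needed is the elementary remark that the fixed points of a linear transformation of $V$ form a subspace, and in particular that this set is never empty.

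First I would record that, for $g\in G$, the value $\pi(g)$ is the number of $v\in V$ with $vg=v$; since these vectors form the subspace $\ker(g-1_V)$, we get $\pi(g)=q^{\dim\ker(g-1_V)}\ge 1$, the point being simply that $0\in V$ is fixed by every $g$. Next, Theorem~\ref{thrm} gives $\pi^+(g)+\pi^-(g)=\pi(g)\ge 1$, so for each $g\in G$ at least one of $\pi^+(g)$, $\pi^-(g)$ is nonzero.

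Then I would translate the nonvanishing of $\pi^\varepsilon(g)$ into a conjugacy statement. By definition $\pi^\varepsilon(g)$ counts the right cosets $\mathrm{O}_{2m}^\varepsilon(q)x$ in $\Omega^\varepsilon$ that are fixed by $g$, that is, the cosets with $xgx^{-1}\in\mathrm{O}_{2m}^\varepsilon(q)$; hence $\pi^\varepsilon(g)\neq 0$ if and only if $g$ is $G$-conjugate to an element of $\mathrm{O}_{2m}^\varepsilon(q)$. Combining this equivalence with the previous paragraph shows that every $g\in G$ is conjugate to an element of $\mathrm{O}_{2m}^+(q)$ or of $\mathrm{O}_{2m}^-(q)$, which is the corollary.

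I do not anticipate a genuine obstacle: the argument is a short formal deduction from Theorem~\ref{thrm}. The only step that calls for any care is the opening observation that $\pi(g)\ge 1$ for \emph{every} $g\in G$, which becomes transparent as soon as one views $\pi$ as the fixed-point character on the full vector space $V$ (where $0$ is always a fixed point) rather than on $V\setminus\{0\}$.
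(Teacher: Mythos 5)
Your proposal is correct and follows essentially the same route as the paper: observe $\pi(g)\ge 1$ (the paper via $\pi=1+\pi_0$, you via the fixed vector $0$), apply Theorem~\ref{thrm} to get $\pi^+(g)\ge 1$ or $\pi^-(g)\ge 1$, and interpret a fixed coset as a conjugate of $g$ lying in the corresponding orthogonal group. No gaps.
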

\begin{proof}
Let $g\in \Sp_{2m}(q)$. Then $\pi(g)=(1+\pi_0)(g)=1(g)+\pi_0(g)\geq 1(g)=1$ and therefore, since $\pi=\pi^{+} + \pi^{-}$ by Theorem~\ref{thrm}, either $\pi^+(g)\geq 1$ or $\pi^-(g)\geq 1$; that is, $g$ fixes some point in $\Omega^+$ or in $\Omega^-$. In the first case $g$ has a conjugate in $\mathrm{O}_{2m}^{+}(q)$ and in the second case $g$ has a conjugate in $\mathrm{O}_{2m}^-(q)$.
\end{proof}

\thebibliography{10}
\bibitem{C}P. J. Cameron, \textit{Permutation groups}, London Mathematical Society Student Texts, 45. Cambridge University Press, Cambridge, 1999.

\bibitem{CV}P. J. Cameron, J. H. van Lint, \textit{Designs, graphs, codes and their links}, London Mathematical Society Student Texts, 22. Cambridge University Press, Cambridge, 1991.

\bibitem{ATLAS}J.~H.~Conway, R.~T.~Curtis, S.~P.~Norton, R.~A.~Parker, R.~A.~Wilson, \textit{Atlas of finite groups}, Clarendon Press, Oxford, 1985.

\bibitem{D}R.~H.~Dye, Interrelations of symplectic  and orthogonal groups in characteristic two, \textit{J. Algebra} \textbf{59} (1979), 202--221.

\bibitem{FK}P.~F\"{o}rster, L.~G.~Kov\'acs, A problem of Wielandt on finite permutation groups, \textit{J. London Math. Soc. (2)} \textbf{41} (1990), 231--243.

\bibitem{Gow}R.~Gow, Products of two involutions in classical groups of characteristic $2$, \textit{J. Algebra} \textbf{71} (1981), 583--591.

\bibitem{GS}R.~M.~Guralnick, J.~Saxl, Primitive permutation characters, \textit{London Math. Soc. Lecture Note Ser. } \textbf{165}, Cambridge Univ. Press, Cambridge, 1992, 364--376.

\bibitem{I}N.~F.~J. Inglis, The embedding $\mathrm{O}(2m,2^k)\leq \mathrm{Sp}(2m,2^k)$, \textit{Arch. Math.} \textbf{54} (1990), 327--330.

\bibitem{KL}P.~Kleidman, M.~Liebeck, \textit{The Subgroup Structure of the Finite Classical Groups}, London Math. Society Lecture Notes 129, Cambridge University Press, Cambridge, 1990.

\bibitem{L}M.~W.~Liebeck, Permutation modules for rank $3$ symplectic and orthogonal groups, \textit{J. Algebra} \textbf{92} (1985), 9--15.

\bibitem{LPS}M.~W.~Liebeck, C.~E.~Praeger, J.~Saxl, \textit{The maximal
  factorizations of the finite simple groups and their automorphism
  groups}, Memoirs of the American Mathematical Society, Volume
  \textbf{86}, Nr \textbf{432}, Providence, Rhode Island, USA, 1990.
  
\bibitem{Serre}J-P.~Serre, \textit{Linear representations of finite groups}, Graduate Texts in Mathematics \textbf{42}, Springer-Verlag, New York, 1977

\bibitem{SZ}J.~Siemons, A.~Zalesskii, Regular orbits of cyclic subgroups in permutation representations of certain simple groups, \textit{J. Algebra} \textbf{256} (2002), 611--625.

\bibitem{P}P.~Spiga, Permutation characters and fixed-point-free elements in permutation groups, \textit{J. Algebra} \textbf{299} (2006), 1--7.

\bibitem{W}H.~Wielandt, Problem~$6.6$, The Kourovka Notebook, \textit{Amer. Math. Soc. Translations (2)} \textbf{121} (1983).
\end{document}